\title[K-stability and alpha invariants]{
K-stability of Fano manifolds with not small alpha invariants}
\author{Kento Fujita} 
\date{\today}
\subjclass[2010]{Primary 14J45; Secondary 14L24}
\keywords{Fano varieties, K-stability, K\"ahler-Einstein metrics}
\address{Research Institute for Mathematical Sciences, Kyoto University, Kyoto 606-8502, Japan}
\email{fujita@kurims.kyoto-u.ac.jp}
\newcommand{\pr}{\mathbb{P}}
\newcommand{\Z}{\mathbb{Z}}
\newcommand{\Q}{\mathbb{Q}}
\newcommand{\R}{\mathbb{R}}
\newcommand{\C}{\mathbb{C}}
\newcommand{\G}{\mathbb{G}}
\newcommand{\Aut}{\operatorname{Aut}}
\newcommand{\DF}{\operatorname{DF}}
\newcommand{\ord}{\operatorname{ord}}
\newcommand{\vol}{\operatorname{vol}}
\newcommand{\sC}{\mathcal{C}}
\newcommand{\sO}{\mathcal{O}}
\newcommand{\sN}{\mathcal{N}}
\newcommand{\sX}{\mathcal{X}}
\newcommand{\sL}{\mathcal{L}}
\newtheorem{theorem}{Theorem}[section]
\newtheorem{lemma}[theorem]{Lemma}
\newtheorem{proposition}[theorem]{Proposition}
\newtheorem{corollary}[theorem]{Corollary}
\theoremstyle{definition}
\newtheorem{definition}[theorem]{Definition}
\newtheorem{remark}[theorem]{Remark}
\newtheorem{example}[theorem]{Example}
\begin{document}

\maketitle 

\begin{abstract}
We show that any $n$-dimensional Fano manifold $X$ with $\alpha(X)=n/(n+1)$ and 
$n\geq 2$ is K-stable, where $\alpha(X)$ is the alpha invariant of 
$X$ introduced by Tian. In particular, 
any such $X$ admits K\"ahler-Einstein metrics and the 
holomorphic automorphism group $\operatorname{Aut}(X)$ of $X$ is finite. 
\end{abstract}

\setcounter{tocdepth}{1}
\tableofcontents

\section{Introduction}\label{intro_section}

Let $X$ be an $n$-dimensional 
\emph{Fano manifold}, that is, a smooth projective variety $X$ over the complex 
number field $\C$ such that the anti-canonical divisor $-K_X$ is ample. 
It is an interesting question whether $X$ admits K\"ahler-Einstein metrics or not. 
In 1987, Tian \cite{Tia87} gave a sufficient condition for the problem; if 
the \emph{alpha invariant} $\alpha(X)$ of $X$ is \emph{strictly bigger than $n/(n+1)$}, 
then $X$ admits K\"ahler-Einstein metrics. For the definition of $\alpha(X)$ in this 
article, we use Demailly's algebraic interpretation \cite{Dem08}
(see also \cite{TY, Zel, Lu}). 

\begin{definition}[{\cite{Tia87, Dem08}}]\label{alpha_dfn}
Let $X$ be a \emph{$\Q$-Fano variety}, that is, 
a normal complex projective variety with at most log terminal singularities and 
the anti-canonical divisor $-K_X$ ample $\Q$-Cartier. 
The \emph{alpha invariant} $\alpha(X)$ of $X$ 
is defined by the supremum of positive rational numbers $\alpha$ such that 
the pair $(X, \alpha D)$ is log canonical for any effective $\Q$-divisor $D$ 
with $D\sim_\Q -K_X$. 
\end{definition}

On the other hand, it has been known that a Fano manifold $X$ admits 
K\"ahler-Einstein metrics if and only if $X$ is \emph{K-polystable} by 
the works \cite{DT92, tian1, don, don05, CT, stoppa, mab1, mab2, B} and 
\cite{CDS1, CDS2, CDS3, tian2}. In this article, we will focus on the conditions 
\emph{K-stability} and \emph{K-semistability}; K-stability is stronger than 
K-polystability and K-polystability is stronger than K-semistability. 
Odaka and Sano \cite[Theorem 1.4]{OS12} (see also its generalizations 
\cite{dervan, BHJ, FO}) proved a variant of Tian's theorem; 
if an $n$-dimensional $\Q$-Fano variety $X$ satisfies that $\alpha(X)>n/(n+1)$ 
(resp.\ $\alpha(X)\geq n/(n+1)$), then $X$ is K-stable (resp.\ K-semistable). 
Thus, from Odaka-Sano's theorem, it has been known the \emph{K-semistability} of 
$n$-dimensional Fano manifolds $X$ with $\alpha(X)=n/(n+1)$. However, 
it has not been known until now that the \emph{K-stability} of those $X$. 
The main result in this article is to prove the K-stability of those $X$ with $n\geq 2$
(see \cite[Conjecture 5.1]{OS12}). Note that, if $n=1$, then $X\simeq\pr^1$, 
$\alpha(\pr^1)=1/2$, and $\pr^1$ is not K-stable but K-semistable. 

\begin{theorem}[Main Theorem]\label{mainthm}
If an $n$-dimensional Fano manifold $X$ satisfies that $\alpha(X)\geq n/(n+1)$ and 
$n\geq 2$, 
then $X$ is K-stable. In particular, $X$ admits K\"ahler-Einstein metrics and the 
holomorphic automorphism group $\Aut(X)$ of $X$ is a finite group. 
\end{theorem}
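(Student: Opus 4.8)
The plan is to use the valuative criterion for K-stability, which characterizes K-stability in terms of the positivity of the beta invariant $\beta(E) = A_X(E) - S(E)$ for all prime divisors $E$ over $X$. Here $A_X(E)$ is the log discrepancy and $S(E) = \frac{1}{(-K_X)^n}\int_0^\infty \vol(-K_X - tE)\,dt$. Since $\alpha(X) \geq n/(n+1)$ already gives K-semistability by Odaka--Sano, hence $\beta(E) \geq 0$ for all $E$, the task reduces to ruling out the possibility that $\beta(E) = 0$ for some prime divisor $E$ over $X$. So I would argue by contradiction: suppose $E$ is a prime divisor over $X$ with $\beta(E) = A_X(E) - S(E) = 0$.

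First I would extract as much rigidity as possible from the equality case in the Odaka--Sano estimate. The proof of their theorem bounds $S(E)$ in terms of the pseudoeffective threshold $\tau(E) = \sup\{t : -K_X - tE \text{ big}\}$ and compares with $A_X(E)$ via $\alpha(X)$; concretely one has an inequality roughly of the shape $S(E) \leq \frac{n}{n+1}\tau(E)$ together with $A_X(E) \geq \alpha(X)\,\tau(E) \geq \frac{n}{n+1}\tau(E)$, so $\beta(E) = 0$ forces both inequalities to be equalities. The equality $A_X(E) = \alpha(X)\tau(E)$ combined with $\alpha(X) \geq n/(n+1)$ pins down $\alpha(X) = n/(n+1)$ exactly and says that the divisor computing $\alpha$ (or a limit of such) is intimately tied to $E$; the equality $S(E) = \frac{n}{n+1}\tau(E)$ forces the volume function $t \mapsto \vol(-K_X - tE)$ to behave like the extremal profile, namely $\vol(-K_X - tE) = (-K_X)^n(1 - t/\tau(E))^n$ on the whole interval $[0,\tau(E)]$. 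This last condition is extremely restrictive: it is the volume profile of $\pr^n$ blown up at nothing, or more precisely it forces the Okounkov-body/Zariski-decomposition data of $-K_X$ along $E$ to be as degenerate as possible.

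Next I would translate the extremal volume profile into birational geometry. The identity $\vol(-K_X - tE) = (-K_X)^n(1-t/\tau)^n$ for all $t \in [0,\tau]$ means that on a model $Y \to X$ extracting $E$, the restricted volume / Zariski decomposition shows $-K_X - tE$ has no negative part until $t = \tau$ and then $\vol$ drops to zero to order exactly $n$; this is essentially the signature of a $\pr^n$-bundle-type or toric-type degeneration, and one expects it to force $X \simeq \pr^n$ (for which $\alpha = 1/(n+1) < n/(n+1)$ when $n \geq 2$, a contradiction) or at least to force the existence of a divisorial contraction exhibiting $X$ as very special. I would combine this with the equality in the log canonical threshold estimate: $(X, \frac{n}{n+1}D)$ is log canonical but strictly so, with a log canonical center related to $\Image(E \to X)$, and Kawamata-type subadjunction or inversion of adjunction on this center should yield geometric constraints — in dimension $2$ one can likely finish by the classification of del Pezzo surfaces and their known $\alpha$-invariants (the only del Pezzo surface with $\alpha = 2/3$ is $\pr^1 \times \pr^1$ or $\Bl_1\pr^2$, and one checks $\beta > 0$ there directly), and in higher dimension by a boundedness/induction argument.

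The main obstacle, I expect, is the higher-dimensional step: extracting a clean geometric classification from the equality $\vol(-K_X - tE) = (-K_X)^n(1-t/\tau(E))^n$ without already knowing $X$ is toric or a projective space. Handling a general prime divisor $E$ \emph{over} $X$ (not on $X$) means working on a possibly complicated model $Y$, and the Zariski decomposition of $-K_Y - tE$ can pick up many components; showing that the extremal profile forces all these to vanish, so that $-K_X - tE$ stays nef until $\tau(E)$ and the contraction of $E$ at $t = \tau(E)$ is to a point with multiplicity $(-K_X)^n$, is the technical heart. An alternative route, which I would keep in reserve, is to avoid classification entirely: refine the Odaka--Sano inequality itself by using a \emph{two}-parameter or higher-jet version of the $\alpha$-invariant estimate (e.g. incorporating $\alpha(X, \text{center of } E)$ or a log-version), so that $\alpha(X) \geq n/(n+1)$ with $n \geq 2$ gives the \emph{strict} inequality $\beta(E) > 0$ outright, using the extra room that appears precisely because $n/(n+1)$ is bounded away from the critical ratio $\dim(\text{center}) / (\dim(\text{center})+1)$ whenever the center is not a point.
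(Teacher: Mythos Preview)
Your overall architecture matches the paper's: use the valuative criterion, suppose there is a dreamy $F$ with $\beta(F)\le 0$, squeeze the Odaka--Sano inequality to equality, deduce $X\simeq\pr^n$, and contradict $\alpha(X)\ge n/(n+1)$ with $n\ge 2$. But the execution has a real error and a genuine gap.

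First, the equality profile is wrong. The bound $S(F)\le \tfrac{n}{n+1}\tau(F)$ is proved via concavity of the \emph{restricted} volume $Q(x)^{1/(n-1)}$, where $Q(x)=-\tfrac{1}{n}\tfrac{d}{dx}\vol_X(-K_X-xF)$; its equality case is $Q(x)=c\,x^{n-1}$, hence
\[
\vol_X(-K_X-xF)=((-K_X)^{\cdot n})\Bigl(1-(x/\tau(F))^{n}\Bigr),
\]
not $((-K_X)^{\cdot n})(1-x/\tau(F))^{n}$. Your formula integrates to $S(F)=\tau(F)/(n+1)$, the \emph{minimum}, not the maximum. The correct profile immediately says that $(\sigma^*(-K_X)^{\cdot n-1-i}\cdot(-F)^{\cdot i}\cdot F)=0$ for $i<n-1$, so $\sigma(F)$ is a \emph{point} $p\in X$, and also that $\varepsilon(F)=\tau(F)$ (the nef and pseudoeffective thresholds coincide). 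This is the first half of the rigidity you were groping for.

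Second, and this is the missing key lemma, you never use that $X$ is smooth, yet the statement is false without it (the paper gives $\pr(1,2,3)$ as a counterexample). Here is what smoothness buys. From $\beta(F)=0$ and $A_X(F)=\tfrac{n}{n+1}\tau(F)$ one gets, by looking at the second contraction $\pi\colon Y\to Z$ associated to $\sigma^*(-K_X)-\tau(F)F$, that $\dim Z=n-1$, a general fibre $l\simeq\pr^1$ with $(F\cdot l)=1$, and then the numerics force $\tau(F)=n+1$ and $A_X(F)=n$ exactly. Now $p\in X$ is a \emph{smooth} point, and a divisorial valuation centred at a smooth point with log discrepancy $n$ must be the ordinary blow-up (\cite[Lemma~2.45]{KoMo}); hence $Y=\Bl_p X$ is smooth, $F\simeq\pr^{n-1}$, and $\pi$ is a $\pr^1$-bundle with section $F$. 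Pushing forward $0\to\sO_Y\to\sO_Y(F)\to\sN_{F/Y}\to 0$ gives $Y\simeq\pr_{\pr^{n-1}}(\sO\oplus\sO(-1))$, so $X\simeq\pr^n$. Your proposal never isolates the step ``$A_X(F)=n$ at a smooth point $\Rightarrow$ ordinary blow-up'', which is precisely where the argument would collapse for singular $X$.

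Two minor corrections: the del Pezzo surfaces with $\alpha=2/3$ are those of degree $4$ and smooth cubics with an Eckardt point, not $\pr^1\times\pr^1$ or $\Bl_1\pr^2$; and your fallback idea of a ``center-dimension'' refinement of Odaka--Sano cannot work as stated, because in the actual equality case the center \emph{is} a point, so there is no room coming from $\dim(\text{center})$.
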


We note that there are many examples of $n$-dimensional Fano manifolds $X$ with 
$\alpha(X)=n/(n+1)$. 

\begin{example}\label{alpha_example}
Let $X$ be an $n$-dimensional Fano manifold. 
\begin{enumerate}
\renewcommand{\theenumi}{\arabic{enumi}}
\renewcommand{\labelenumi}{(\theenumi)}
\item\label{alpha_example1}
\cite[\S 3]{park}, \cite[Theorem 1.7]{cheltsov2} 
If $n=2$, then $\alpha(X)=2/3$ if and only if 
$((-K_X)^{\cdot 2})=4$ or $X$ is a smooth cubic surface admitting an Eckardt point. 
\item\label{alpha_example2}
\cite[Theorem 1.3]{cheltsov1}, \cite[Corollary 4.10]{CP}, \cite[Theorem 0.2]{dFEM}
If $X$ is a hypersurface of degree $n+1$ in $\pr^{n+1}$, then $\alpha(X)\geq n/(n+1)$ 
holds. Moreover, $\alpha(X)=n/(n+1)$ holds if $X$ contains an 
$(n-1)$-dimensional cone. 
\end{enumerate}
\end{example}

From Theorem \ref{mainthm} and Example \ref{alpha_example} \eqref{alpha_example2}, 
we immediately get the following corollary: 

\begin{corollary}\label{hyper_corollary}
Let $X$ be an arbitrary 
smooth hypersurface in $\pr^{n+1}$ of degree $n+1$, where $n$ is a 
positive integer with $n\geq 2$. Then $X$ is a K-stable $n$-dimensional Fano manifold. 
In particular, $X$ admits K\"ahler-Einstein metrics. 
\end{corollary}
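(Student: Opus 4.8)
The plan is to argue by contradiction via the valuative criterion for K-stability. By Odaka--Sano's theorem, $X$ is K-semistable, so
$\beta_X(E):=A_X(E)\,((-K_X)^{\cdot n})-\int_0^{\infty}\vol(-K_X-tE)\,dt\ge 0$
for every prime divisor $E$ over $X$. Were $X$ not K-stable, it would be strictly K-semistable, and then, by Fujita's valuative criterion together with the fact that the relevant infimum is attained (Blum--Xu), there would be an \emph{honest} prime divisor $F$ over $X$ with $\beta_X(F)=0$; one may moreover take $F$ to be extracted by a birational morphism $\pi\colon Y\to X$ with $\Exc(\pi)=F$ and $-F$ $\pi$-ample. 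So it suffices to derive a contradiction from the existence of such an $F$, granted $\alpha(X)\ge n/(n+1)$ and $n\ge 2$.

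First I would record the elementary bound $\alpha(X)\le A_X(F)/\tau(F)$, where $\tau(F):=\sup\{t\in\Q_{>0}:\pi^*(-K_X)-tF\text{ is big}\}$ is the pseudo-effective threshold. Indeed, for every $t<\tau(F)$ there is an effective $\Q$-divisor $D\sim_\Q -K_X$ with $\ord_F(D)\ge t$, and then $(X,\alpha D)$ is not log canonical as soon as $\alpha>A_X(F)/\ord_F(D)$, so $\alpha(X)\le A_X(F)/t$. Since $\beta_X(F)=0$ is precisely $A_X(F)=S_X(F)$, where $S_X(F):=((-K_X)^{\cdot n})^{-1}\int_0^{\infty}\vol(-K_X-tF)\,dt$, this rewrites as $\alpha(X)\le S_X(F)/\tau(F)$.

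The core of the argument is the sharp inequality $S_X(F)\le \tfrac{n}{n+1}\,\tau(F)$ together with a description of its equality case; this is the valuative incarnation of Odaka--Sano's estimate, and it genuinely uses the ampleness of $-K_X$ rather than mere formal properties of the volume function. Writing $g(t):=\vol(\pi^*(-K_X)-tF)$, one uses that $g$ is continuous, non-increasing with $g(\tau(F))=0$, that $g^{1/n}$ is concave, and that the models occurring in the $(\pi^*(-K_X)-tF)$-MMP over $X$ are controlled (equivalently, the Okounkov body of $-K_X$ along a flag through $F$ is suitably constrained); the upshot is $\int_0^{\tau(F)}g(t)\,dt\le \tfrac{n}{n+1}\tau(F)\,((-K_X)^{\cdot n})$, with equality exactly when $g$ has the rigid conical profile $g(t)=((-K_X)^{\cdot n})\bigl(1-(t/\tau(F))^{n}\bigr)$. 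Feeding this back gives $\alpha(X)\le n/(n+1)$.

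By the hypothesis $\alpha(X)\ge n/(n+1)$, all the inequalities above collapse to equalities: $\alpha(X)=n/(n+1)$, $A_X(F)=S_X(F)=\tfrac{n}{n+1}\tau(F)$, and $g$ has the conical profile. I expect this rigidity to be the main obstacle. Because $\tfrac{d}{dt}\vol(\pi^*(-K_X)-tF)\big|_{t=0}=-n\,((-K_X)^{\cdot n-1}\cdot\pi_*F)$ must vanish for $n\ge 2$, the divisor $F$ is necessarily $\pi$-exceptional (a divisor strictly over $X$), and the conical profile forces the behaviour of $F$ with respect to $-K_X$ to be Seshadri-maximal; since $X$ is a smooth Fano manifold with $n\ge 2$, this is extremely restrictive --- in the basic case where $F$ arises from blowing up a point it forces $X\cong\pr^n$, and in general it forces a structure on $X$ incompatible with $\alpha(X)=n/(n+1)$ (recall for instance $\alpha(\pr^n)=1/(n+1)<n/(n+1)$ when $n\ge 2$). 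This contradiction shows no destabilising $F$ exists, so $X$ is uniformly K-stable, hence K-stable; the existence of a K\"ahler--Einstein metric and the finiteness of $\Aut(X)$ then follow from the equivalences recalled in the introduction. The two places that need real work are the sharp inequality $S_X(F)\le\tfrac{n}{n+1}\tau(F)$ with its equality case, and the geometric analysis of that equality case, where one must extract enough structure to contradict $n\ge 2$ and the smoothness of $X$.
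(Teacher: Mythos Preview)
Your approach is essentially the paper's, but you have conflated the corollary with the main theorem. The corollary is a one-line consequence of Theorem~\ref{mainthm}: for a smooth degree-$(n+1)$ hypersurface in $\pr^{n+1}$ one has $\alpha(X)\ge n/(n+1)$ by results of Chel'tsov, Chel'tsov--Park, and de~Fernex--Ein--Musta\c{t}\u{a} (Example~\ref{alpha_example}\,\eqref{alpha_example2}), and then the main theorem applies. You never cite or establish this bound for hypersurfaces; you simply write ``granted $\alpha(X)\ge n/(n+1)$'' and proceed to reprove Theorem~\ref{mainthm}. As a proof of the \emph{corollary}, that is a genuine omission.

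Your sketch of the main theorem itself tracks the paper's argument (valuative criterion, the inequality $S_X(F)\le\frac{n}{n+1}\tau(F)$ and its equality case, then a characterization of $\pr^n$), but the decisive rigidity step is left vague. The paper makes it precise as follows: from the conical profile one first extracts that $\sigma(F)$ is a point $p\in X$, that $\varepsilon(F)=\tau(F)$, and---via a $\pr^1$-fibration $\pi\colon Y\to Z$ with $F$ a section---that $A_X(F)=n$ and $\tau(F)=n+1$ exactly (Propositions~\ref{two_proposition} and~\ref{three_proposition}). The crucial use of smoothness is then that a prime divisor centered at a \emph{smooth} point with log discrepancy exactly $n$ must be the exceptional divisor of the ordinary blowup at $p$ (\cite[Corollary~2.31(2), Lemma~2.45]{KoMo}); this pins down $Y\simeq\pr_{\pr^{n-1}}(\sO\oplus\sO(-1))$ and hence $X\simeq\pr^n$. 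Your phrase ``in general it forces a structure on $X$ incompatible with $\alpha(X)=n/(n+1)$'' is exactly where this argument lives, and it does not go through without the numerical identification $A_X(F)=n$; the paper's Example~\ref{toric_example} shows the conclusion fails for singular $X$, so ``Seshadri-maximal'' alone is not enough. Also, the paper works with \emph{dreamy} divisors (Definition~\ref{beta_definition}\,\eqref{beta_definition5}) rather than invoking Blum--Xu, which keeps the MMP/semiample-model machinery of Lemma~\ref{one_lemma} available throughout.
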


\begin{remark}\label{main_rmk}
In Theorem \ref{mainthm}, we assume that $X$ is smooth. In fact, 
we crucially use the smoothness of $X$ in order to prove the theorem. 
See Theorem \ref{P_theorem} and Example \ref{toric_example}. 
\end{remark}

For the proof of Theorem \ref{mainthm}, we use a valuative criterion for K-stability 
and K-semistability of $\Q$-Fano varieties \cite{li2, fjt2} (see also \cite{li1}).
If an $n$-dimensional Fano manifold $X$ satisfies that $\alpha(X)=n/(n+1)$, $n\geq 2$ 
and $X$ is not K-stable, then there exists a dreamy prime divisor $F$ over $X$ 
with $\beta(F)=0$ (see \S \ref{K_section} in detail). By viewing the $F$ in detail, 
we can show that $X$ must be isomorphic to $\pr^n$ (see 
\S \ref{dreamy_section} and \S \ref{proof_section} in detail). 
This gives a contradiction since $\alpha(X)=n/(n+1)$ and $n\geq 2$. 

In this article, we work over the category of algebraic schemes over the complex 
number filed $\C$. For the theory of minimal model program, we refer the readers 
to the book \cite{KoMo}; for the theory of toric geometry, we refer the readers 
to the book \cite{fulton}. We do not distinguish line bundles 
(or more generally $\Q$-line bundles) 
and Cartier divisors (or more generally $\Q$-Cartier $\Q$-divisors) if there is 
no confusion.

\section{K-stability}\label{K_section}

We quickly recall the notion of K-stability and K-semistability. We remark that 
there are many equivalent definitions of K-(semi)stability. 

\begin{definition}[{see \cite{tian1,don,RT,wang,odk,LX}}]\label{K_definition}
Let $X$ be an $n$-dimensional $\Q$-Fano variety. 
\begin{enumerate}
\renewcommand{\theenumi}{\arabic{enumi}}
\renewcommand{\labelenumi}{(\theenumi)}
\item\label{K_definition1}
A \emph{test configuration} $(\sX, \sL)/\pr^1$ \emph{of} $X$ consists of the 
following data: 
\begin{itemize}
\item
a normal projective variety $\sX$ together with a surjection 
$\alpha\colon\sX\to\pr^1$, 
\item
an $\alpha$-ample $\Q$-line bundle $\sL$ on $\sX$, 
\item
an action $\G_m\curvearrowright(\sX, \sL)$ such that the morphism $\alpha$ is 
$\G_m$-equivariant with respects to the natural multiplicative action 
$\G_m\curvearrowright\pr^1$ and there exists a $\G_m$-equivariant isomorphism 
\[
(\sX, \sL)|_{\sX\setminus\sX_0}\simeq
\left(X\times(\pr^1\setminus\{0\}), p_1^*(-K_X)\right), 
\]
where $\G_m$ is the multiplicative group, $\sX_0$ is the scheme-theoretic fiber of 
$\alpha$ at $0\in\pr^1$ and $p_1\colon X\times (\pr^1\setminus\{0\})\to X$ is 
the first projection morphism. 
\end{itemize}
A test configuration $(\sX, \sL)/\pr^1$ is said to be \emph{trivial} 
if $(\sX, \sL)/\pr^1$ is $\G_m$-equivariantly isomorphic to 
$(X\times\pr^1, p_1^*(-K_X))$ with the trivial $\G_m$-action, where 
$p_1\colon X\times\pr^1\to X$ is the first projection morphism. 
\item\label{K_definition2}
Let $(\sX, \sL)/\pr^1$ be a test configuration of $X$. The \emph{Donaldson-Futaki 
invariant} $\DF(\sX, \sL)$ \emph{of} $(\sX, \sL)/\pr^1$ is defined by: 
\[
\DF(\sX, \sL):=\frac{1}{(n+1)((-K_X)^{\cdot n})}\left(
n(\sL^{\cdot n+1})+(n+1)(\sL^{\cdot n}\cdot K_{\sX/\pr^1})\right),
\]
where $K_{\sX/\pr^1}:=K_{\sX}-\alpha^*K_{\pr^1}$. 
\item\label{K_definition3}
$X$ is said to be \emph{K-stable} (resp.\ \emph{K-semistable}) if 
$\DF(\sX, \sL)>0$ (resp.\ $\DF(\sX, \sL)\geq 0$) holds for any nontrivial 
test configuration $(\sX, \sL)/\pr^1$ of $X$. 
\end{enumerate}
\end{definition}

We recall a following 
valuative criterion for K-(semi)stability of $\Q$-Fano varieties \cite{li2, fjt2}. 

\begin{definition}[{see \cite[Definition 1.1]{fjt2}}]\label{beta_definition}
Let $X$ be an $n$-dimensional $\Q$-Fano variety and let $F$ be a prime divisor 
over $X$, that is, there exists a projective birational morphism 
$\sigma\colon Y\to X$ with $Y$ normal such that $F$ is a prime divisor on $Y$. 
\begin{enumerate}
\renewcommand{\theenumi}{\arabic{enumi}}
\renewcommand{\labelenumi}{(\theenumi)}
\item\label{beta_definition1}
For any $x\in\R_{\geq 0}$, we define 
\begin{eqnarray*}
&&\vol_X(-K_X-xF):=\vol_Y\left(\sigma^*(-K_X)-xF\right)\\
&:=&\lim_{
\substack{k\to\infty\\
-kK_X:\text{{ Cartier}}
}}
\frac{h^0(Y, \sigma^*(-kK_X)+\lfloor -kxF\rfloor)}{k^n/n!}.
\end{eqnarray*}
By \cite{L1, L2}, the limit exists. Moreover, 
the function $\vol_X(-K_X-xF)$ is a continuous and non-increasing function 
on $x\in[0, +\infty)$. 
\item\label{beta_definition2}
We set the \emph{pseudo-effective threshold} $\tau(F)$ of $-K_X$ with respects to 
$F$ as
\[
\tau(F):=\sup\{\tau\in\R_{>0}\,|\, \vol_X(-K_X-\tau F)>0\}.
\]
We note that $\tau(F)\in\R_{>0}$ holds. Moreover, by \cite[Theorem A]{BFJ}, the function $\vol_X(-K_X-xF)$ 
is $\sC^1$ on $x\in[0, \tau(F))$. 
\item\label{beta_definition3}
We set the \emph{log discrepancy} $A_X(F)$ of $X$ with respects to $F$ as 
$A_X(F):=1+\ord_F(K_{Y/X})$. 
\item\label{beta_definition4}
We set 
\[
\beta(F):=A_X(F)((-K_X)^{\cdot n})-\int_0^{\infty}\vol_X(-K_X-xF)dx.
\]
\item\label{beta_definition5}
$F$ is said to be \emph{dreamy} if the graded $\C$-algebra
\[
\bigoplus_{k,j\in\Z_{\geq 0}}H^0(Y, \sigma^*(-kk_0K_X)-jF)
\]
is finitely generated for some (hence, for any) $k_0\in\Z_{>0}$ with $-k_0K_X$ Cartier. 
\end{enumerate}
We remark that all the above definitions do not depend on the choice of the morphism 
$\sigma\colon Y\to X$. More precisely, those are defined from only 
the divisorial valuation 
on the function field of $X$ given by $F$. 
\end{definition}

The following theorem is essential in order to prove Theorem \ref{mainthm}. 

\begin{theorem}[{see \cite[Theorems 1.3 and 1.4]{fjt2} and 
\cite[Theorem 3.6]{li2}}]\label{beta_theorem}
Let $X$ be a $\Q$-Fano variety. Then $X$ is K-stable $($resp.\ K-semistable$)$ 
if and only if $\beta(F)>0$ $($resp.\ $\beta(F)\geq 0)$ holds for any \emph{dreamy} 
prime divisor $F$ over $X$. 
\end{theorem}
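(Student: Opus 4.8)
The plan is to prove both implications through an explicit dictionary between dreamy prime divisors over $X$ and a distinguished class of test configurations, under which the Donaldson--Futaki invariant is identified, up to a fixed positive factor, with $\beta$. Concretely, I would show that for the test configuration $(\sX_F,\sL_F)/\pr^1$ naturally attached to a dreamy divisor $F$ one has
\[
\DF(\sX_F,\sL_F)=\frac{\beta(F)}{((-K_X)^{\cdot n})},
\]
and conversely that every nontrivial test configuration can, after normalization and an application of the Li--Xu minimal model program for test configurations, be replaced by one arising in this way without increasing its Donaldson--Futaki invariant. Both directions of the theorem then reduce to this single intersection-theoretic identity.

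For the direction ``K-stable $\Rightarrow\beta(F)>0$'', I would fix a dreamy prime divisor $F$ on $\sigma\colon Y\to X$ and, using the finite generation in Definition~\ref{beta_definition}\,\eqref{beta_definition5}, form the Rees-type graded algebra
\[
\bigoplus_{j\in\Z_{\geq 0}}\Big(\bigoplus_{k\in\Z_{\geq 0}}H^0(Y,\sigma^*(-kk_0K_X)-jF)\Big)\,t^{-j},
\]
take $\Proj$ over $\A^1$, compactify over $\pr^1$, and normalize to obtain a test configuration $(\sX_F,\sL_F)/\pr^1$ with polarization induced by $-K_X$; the grading in $j$ supplies the $\G_m$-action. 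One checks this is a genuine test configuration of $X$, nontrivial precisely when $F$ does not arise from a trivial $\G_m$-degeneration of $X$. The identity above then forces $\beta(F)>0$ from $\DF(\sX_F,\sL_F)>0$.

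For the converse, suppose $\beta(F)>0$ for every dreamy $F$ and let $(\sX,\sL)/\pr^1$ be nontrivial. Since normalization only decreases $\DF$, I may assume $\sX$ normal, and by the Li--Xu theory I would replace $(\sX,\sL)$ by a \emph{special} test configuration, one whose central fiber $\sX_0$ is an irreducible normal $\Q$-Fano variety with $\sL=-K_{\sX/\pr^1}$, without increasing $\DF$. For such a configuration the valuation $\ord_{\sX_0}$ on $K(\sX)=K(X)(t)$ restricts, after the rescaling dictated by the $\G_m$-weight, to $c\cdot\ord_F$ for a dreamy prime divisor $F$ over $X$, with dreaminess guaranteed by the degenerating structure of $\sX_0$. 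Matching log discrepancies and anticanonical volumes yields $\DF(\sX,\sL)=\beta(F)/((-K_X)^{\cdot n})>0$, proving K-stability; the K-semistable case runs identically with non-strict inequalities, taking limits over approximating special configurations.

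The main obstacle is the intersection-theoretic identity itself, that is, the equality $n(\sL^{\cdot n+1})+(n+1)(\sL^{\cdot n}\cdot K_{\sX/\pr^1})=(n+1)\beta(F)$ for the configuration attached to $F$. I would derive it by expanding the two intersection numbers on the total space $\sX$ in terms of data on $X$ via asymptotic Riemann--Roch for the filtration of the anticanonical ring by $\ord_F$: the $(\sL^{\cdot n+1})$-term produces the volume integral $\int_0^{\tau(F)}\vol_X(-K_X-xF)\,dx$, while the $(\sL^{\cdot n}\cdot K_{\sX/\pr^1})$-term produces the discrepancy contribution $A_X(F)((-K_X)^{\cdot n})$. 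This step crucially uses the continuity and $\sC^1$-regularity of $\vol_X(-K_X-xF)$ on $[0,\tau(F))$ recorded in Definition~\ref{beta_definition}, together with careful control of non-reduced or non-normal central fibers (where normalization strictly drops $\DF$, which is exactly what preserves the inequalities in the reduction). The most delicate remaining point is the trivial-versus-product distinction: a dreamy divisor fixed by an automorphism $\G_m$-action yields a product test configuration, so one must verify that nontriviality of the configuration corresponds to testing $\beta(F)$ on a genuinely non-product degeneration, ensuring that the sharp (stable, not merely polystable) statement survives.
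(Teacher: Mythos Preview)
The paper does not prove this theorem at all: it is stated with the attribution ``see \cite[Theorems 1.3 and 1.4]{fjt2} and \cite[Theorem 3.6]{li2}'' and no argument is given in the text. Your proposal, by contrast, sketches the actual content of those cited proofs, so there is nothing in the present paper to compare it against line by line.

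That said, your outline is a fair summary of the strategy in \cite{fjt2} and \cite{li2}: associate to a dreamy $F$ the test configuration coming from the Rees algebra of the $\ord_F$-filtration on the anticanonical ring, compute its Donaldson--Futaki invariant as a positive multiple of $\beta(F)$ via asymptotic Riemann--Roch and the Knudsen--Mumford expansion, and for the converse use the Li--Xu reduction to special test configurations together with the observation that the central fiber of a special test configuration determines a divisorial valuation whose associated divisor is dreamy. Two points deserve care if you actually write this up. First, the precise proportionality constant between $\DF$ and $\beta$ depends on the normalizations in Definition~\ref{K_definition}\,\eqref{K_definition2} and Definition~\ref{beta_definition}\,\eqref{beta_definition4}; you should verify your claimed identity $\DF(\sX_F,\sL_F)=\beta(F)/((-K_X)^{\cdot n})$ rather than assert it. Second, the K-\emph{stable} (as opposed to semistable) direction is more delicate than you indicate: the Li--Xu MMP replacement can strictly decrease $\DF$, and one must track when the replacement collapses a nontrivial test configuration to a trivial one; \cite{fjt2} handles this by working with the twist-normalized invariant and analyzing the equality case, not merely by invoking the reduction as a black box.
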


\section{Dreamy prime divisors}\label{dreamy_section}

In this section, we see some properties of dreamy prime divisors in order to 
prove Theorem \ref{mainthm}. 

The following lemma is a consequence of the results \cite[Theorem 4.2]{KKL}, 
\cite[Theorem 4.26]{LM}, \cite[\S 2]{fjt1}, \cite[Claim 6.3]{fjt2} and \cite[Claim 4.4]{FO}. 

\begin{lemma}\label{one_lemma}
Let $X$ be an $n$-dimensional $\Q$-Fano variety and let $F$ be a dreamy 
prime divisor over $X$. 
\begin{enumerate}
\renewcommand{\theenumi}{\arabic{enumi}}
\renewcommand{\labelenumi}{(\theenumi)}
\item\label{one_lemma1}
We have $\tau(F)\in\Q_{>0}$. We can define the restricted volume $($in the sense of 
\cite{ELMNP}$)$ 
\[
Q(x):=-\frac{1}{n}\frac{d}{dx}\vol_X(-K_X-xF)
\]
and the function $Q(x)$ is continuous, $\R_{>0}$-valued for any $x\in(0,\tau(F))$. 
Moreover, we can uniquely extend the values $Q(0)$, $Q(\tau(F))\in\Q_{\geq 0}$ 
continuously. Furthermore, $Q(x)^{1/(n-1)}$ is a concave function 
on $x\in[0, \tau(F)]$. 
\item\label{one_lemma2}
There exists a projective birational morphism $\sigma\colon Y\to X$ with $Y$ normal 
such that $F\subset Y$ is a prime divisor on $Y$ and $-F$ is a $\sigma$-ample 
$\Q$-Cartier divisor on $Y$. The morphism $\sigma$ is unique. $($Of course, 
$\vol_X(-K_X-xF)=\vol_Y(\sigma^*(-K_X)-xF)$ holds for any $x\in[0,\tau(F)]$.$)$
In particular, if $F$ is an exceptional divisor over $X$, then 
the exceptional set of $\sigma$ is equal to $F$ in $Y$. 
\item\label{one_lemma3}
Set 
\[
\varepsilon(F):=\max\{\varepsilon\in\R_{>0}\,|\,\sigma^*(-K_X)-\varepsilon F
\text{ is nef}\},
\]
where $\sigma$ is as in \eqref{one_lemma2}. Then we have $\varepsilon(F)\in(0,\tau(F)]
\cap\Q$, and 
\[
Q(x)=\left((\sigma^*(-K_X)-xF)^{\cdot n-1}\cdot F\right)
\]
holds for any $x\in[0, \varepsilon(F)]$. 
Moreover, there exists a projective morphism $\pi\colon Y\to Z$ 
with $\pi_*\sO_Y=\sO_Z$ and an ample $\Q$-Cartier $\Q$-divisor $H_Z$ on $Z$ such 
that $\pi^*H_Z\sim_\Q\sigma^*(-K_X)-\varepsilon(F)F$ holds. The $\pi$ and $H_Z$ are 
unique. $($We remark that the $\Q$-Cartier divisor $F$ on $Y$ is $\pi$-ample.$)$
\end{enumerate}
\end{lemma}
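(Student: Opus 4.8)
The plan is to derive all three statements from the single structural input that dreaminess of $F$ supplies. Fix a normal model $\sigma_0\colon Y_0\to X$ on which $F$ is a prime divisor and an integer $k_0>0$ with $-k_0K_X$ Cartier; then the section ring $R:=\bigoplus_{k,j\in\Z_{\geq 0}}H^0\bigl(Y_0,\sigma_0^*(-kk_0K_X)-jF\bigr)$ is finitely generated. Feeding $R$ into the geography-of-models machinery of \cite[Theorem 4.2]{KKL} produces, inside the plane of classes $\{a\sigma_0^*(-K_X)-bF\}\subset\ND(Y_0)_{\R}$, a finite subdivision of its big part into rational polyhedral chambers on each of which the ample model is constant and the positive part $P(x)$ of the Zariski decomposition of $\sigma_0^*(-K_X)-xF$ is affine in $x$. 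Since $\sigma_0^*(-K_X)$ is big and nef, $\sigma_0^*(-K_X)-xF$ is big for small $x>0$, and $\tau(F)$ is the value at which $P(x)$ degenerates; as a vertex of a rational subdivision, $\tau(F)\in\Q_{>0}$, the first clause of (1).

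For (2), let $Y$ be the normal projective $X$-variety given by the relative $\Proj$ over $X$ of $\bigoplus_{j\geq 0}\sigma_{0*}\sO_{Y_0}(-jF)$, which is a finitely generated $\sO_X$-algebra as a consequence of the dreaminess of $F$ together with the ampleness of $-K_X$. By construction the induced birational morphism $\sigma\colon Y\to X$ keeps $F$ as a prime divisor and makes $-F$ $\sigma$-ample; this is exactly the construction of \cite[\S 2]{fjt1} (see also \cite[Claim 6.3]{fjt2} and \cite[Claim 4.4]{FO}), which I would cite for the details. Uniqueness of $\sigma$ is uniqueness of the relative ample model. If $F$ is $\sigma$-exceptional then $F\subseteq\Exc(\sigma)$; conversely, $\Exc(\sigma)$ is covered by $\sigma$-contracted curves $C$, for which $\sigma$-ampleness of $-F$ gives $(F\cdot C)<0$, hence $C\subseteq F$; thus $\Exc(\sigma)\subseteq F$ and equality holds. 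The identity $\vol_X(-K_X-xF)=\vol_Y(\sigma^*(-K_X)-xF)$ is the definition of the left-hand side together with its model-independence.

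For (3): set $D_x:=\sigma^*(-K_X)-xF$ on $Y$. For $x>0$ small, $D_x=\sigma^*(-K_X)+x(-F)$ is ample on $Y$ (pullback of ample plus a small $\sigma$-ample divisor), so $\varepsilon(F)>0$; by convexity of $\Nef(Y)$, $D_x$ is ample for all $x\in(0,\varepsilon(F))$ and nef on $[0,\varepsilon(F)]$, with $D_{\varepsilon(F)}$ on a proper face of $\Nef(Y)$. Moreover $\varepsilon(F)\leq\tau(F)$: otherwise $\vol_X(-K_X-xF)$, which equals the polynomial $(D_x^{\cdot n})$ on $[0,\varepsilon(F)]$, would vanish on a nondegenerate interval hence identically, contradicting $\vol_X(-K_X)>0$; and $\varepsilon(F)\in\Q$ because the walls of the subdivision are rational. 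The class $D_{\varepsilon(F)}$ is nef, big and, by the chamber structure, semiample; let $\pi\colon Y\to Z$ be its ample model, so $\pi_*\sO_Y=\sO_Z$ and $\pi^*H_Z\sim_\Q D_{\varepsilon(F)}$ for a unique ample $H_Z$, with $\pi,H_Z$ unique as ample-model data. For a $\pi$-contracted curve $C$ one has $D_{\varepsilon(F)}\cdot C=0$, so $D_x\cdot C>0$ ($x<\varepsilon(F)$) forces $(F\cdot C)>0$; since also $(\varepsilon(F)-x)F\equiv D_x-D_{\varepsilon(F)}$ is, on $\overline{\NE}(Y/Z)$, the restriction of the ample class $D_x$, we get $F>0$ on $\overline{\NE}(Y/Z)\setminus\{0\}$, so $F$ is $\pi$-ample by the relative Kleiman criterion. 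Finally, $D_x$ being nef (and big for $x<\tau(F)$) on $[0,\varepsilon(F)]$ gives $\vol_X(-K_X-xF)=(D_x^{\cdot n})$ there, whence $Q(x)=-\frac1n\frac{d}{dx}(D_x^{\cdot n})=(D_x^{\cdot n-1}\cdot F)$.

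It remains to finish (1). By \cite[Theorem 4.26]{LM} and \cite{ELMNP}, $Q(x)=\vol_{Y|F}(P(x))$ for $x\in(0,\tau(F))$, the restricted volume of the positive part along $F$; this is continuous and, by the chamber subdivision, piecewise polynomial, so the one-sided limits $Q(0)$ and $Q(\tau(F))$ exist and are computed by the polynomial formulas valid on the first and last chambers, hence lie in $\Q_{\geq 0}$. Positivity of $Q$ on $(0,\tau(F))$ --- equivalently, that $F$ is not contained in the restricted base locus of $D_x$ --- is \cite[Claim 6.3]{fjt2} (see also \cite[Claim 4.4]{FO}). Concavity of $Q(x)^{1/(n-1)}$ follows by combining the Khovanskii--Teissier (Brunn--Minkowski) inequality for restricted volumes with the concavity of $x\mapsto P(x)$ (affine on chambers and bending downward at the walls) and the monotonicity and positive homogeneity of $\vol_{Y|F}$, exactly as in \cite[\S 2]{fjt1}; equivalently it is the Okounkov-body slicing statement in \cite[Theorem 4.26]{LM}. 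The step I expect to be the real obstacle is (2) together with the positivity of $Q$ on the open interval: one must genuinely produce the model on which $-F$ is $\sigma$-\emph{ample} (not merely movable) and control where $F$ enters the stable base locus; once these are secured, everything else is bookkeeping with the rational chamber subdivision and standard positivity properties of restricted volumes.
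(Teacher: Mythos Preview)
Your proposal is correct and follows essentially the same route as the paper: both feed the finitely generated bigraded ring into \cite[Theorem 4.2]{KKL} to get the rational chamber decomposition $0=\tau_0<\tau_1<\cdots<\tau_m=\tau(F)$, take $Y$ to be the first ample model $Y_1$ (which is exactly your relative $\Proj$), read off $\varepsilon(F)=\tau_1$ and $\pi\colon Y\to Z$ as the semiample contraction at $\tau_1$, and compute $Q(x)$ as an intersection number on each $Y_i$. For the concavity of $Q(x)^{1/(n-1)}$ the paper uses precisely your second option: on $Y$ the divisor $\sigma^*(-K_X)-\varepsilon F$ is \emph{ample} for small $\varepsilon\in\Q_{>0}$, so choosing a flag $Z_\bullet$ with $Z_1=F$ one may invoke \cite[Theorem 4.26]{LM} to identify $Q(x)/(n-1)!$ with the $(n-1)$-volume of the slice $\{\nu_1=x-\varepsilon\}$ of the Okounkov body and then apply Brunn--Minkowski; your first option via Khovanskii--Teissier plus concavity of $P(x)$ is not used and would need the extra ``bending downward at walls'' argument you flag.
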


\begin{proof}
Let $\psi\colon\tilde{Y}\to X$ be a log resolution with $F\subset\tilde{Y}$. 
By \cite[Theorem 4.2]{KKL}, there exists a sequence of rational numbers 
\[
0=\tau_0<\tau_1<\cdots<\tau_m=\tau(F)
\]
(in particular, $\tau(F)\in\Q_{>0}$ holds) and a mutually distinct birational contraction 
maps 
\[
\phi_i\colon\tilde{Y}\dashrightarrow Y_i
\]
for $1\leq i\leq m$ such that 
\begin{itemize}
\item
for any $x\in(\tau_{i-1}, \tau_i)$, the birational map $\phi_i$ is the ample model of 
$\psi^*(-K_X)-xF$, and 
\item
for any $x\in[\tau_{i-1}, \tau_i]$, the birational map $\phi_i$ is the semiample model of 
$\psi^*(-K_X)-xF$.
\end{itemize}
Set $Y:=Y_1$. The ample model of $\psi^*(-K_X)$ is the $X$ itself. Thus 
there exists a projective birational morphism $\sigma\colon Y\to X$. 
Since $\sigma^*(-K_X)-xF$ is ample for any $x\in(0, \tau_1)$, the divisor $-F$ on $Y$ 
is a $\sigma$-ample $\Q$-Cartier divisor. Thus we have proved \eqref{one_lemma2}. 
(The uniqueness of $\sigma$ is trivial.)

The $\Q$-divisor $\sigma^*(-K_X)-\tau_1F$ is semiample but not ample. 
Thus $\tau_1=\varepsilon(F)$ and there exists a morphism $\pi\colon Y\to Z$ and 
a $\Q$-Cartier $\Q$-divisor $H_Z$ on $Z$ which satisfy the condition in 
\eqref{one_lemma3} and they are unique. Thus we have proved \eqref{one_lemma3}. 

We already know by \cite[Theorem A]{BFJ} that 
the existence of the function $Q(x)$ and is continuous 
on $x\in(0, \tau(F))$
(see Definition \ref{beta_definition} \eqref{beta_definition2}). 
Note that 
\[
\vol_X(-K_X-xF)=\left(((\phi_i)_*(\psi^*(-K_X)-xF))^{\cdot n}\right)
\]
for any $x\in[\tau_{i-1}, \tau_i]$ (see \cite[Remark 2.4 (i)]{KKL}). 
Thus we have 
\[
Q(x)=\left(((\phi_i)_*(\psi^*(-K_X)-xF))^{\cdot n-1}\cdot (\phi_i)_*F\right).
\]
In particular, $Q(x)>0$ holds for any $x\in(0, \tau(F))$ and we can naturally define the 
values $Q(0)$, $Q(\tau(F))\in\Q_{\geq 0}$. 

Take any $0<\varepsilon\ll 1$ with $\varepsilon\in\Q$. Let us take an arbitrary 
complete flag
\[
Y\supset Z_1\supset\cdots\supset Z_n=\{\text{point}\}
\]
in the sense of \cite{LM} with $Z_1=F$. 
Consider the Okounkov body $\Delta_{Z_\bullet}(\sigma^*(-K_X)-\varepsilon F)
\subset\R_{\geq 0}^n$ of $\sigma^*(-K_X)-\varepsilon F$ with respects to 
$Z_\bullet$ in the sense of \cite{LM}. Since $\sigma^*(-K_X)-\varepsilon F$ is ample, 
by \cite[Theorem 4.26]{LM}, $Q(x)/(n-1)!$ is equal to the restricted volume of 
\[
\{(\nu_1,\dots,\nu_n)\in\Delta_{Z_\bullet}(\sigma^*(-K_X)-\varepsilon F)\,|\,
\nu_1=x-\varepsilon\}
\]
for any $x\in[\varepsilon, \tau(F))$. Since 
$\Delta_{Z_\bullet}(\sigma^*(-K_X)-\varepsilon F)$ is a convex body, 
$Q(x)^{1/(n-1)}$ is a concave function on $x\in[\varepsilon, \tau(F))$ by the 
Brunn-Minkowski theorem. Thus we have proved \eqref{one_lemma1}. 
\end{proof}

The following two propositions are important in this article. 

\begin{proposition}\label{two_proposition}
Let $X$ be an $n$-dimensional $\Q$-Fano variety and let $F$ be a dreamy 
prime divisor over $X$. Let $\sigma\colon Y\to X$ be as in Lemma \ref{one_lemma} 
\eqref{one_lemma2}. Assume that 
\[
\frac{n}{n+1}\tau(F)((-K_X)^{\cdot n})\leq\int_0^{\tau(F)}
\vol_Y(\sigma^*(-K_X)-xF)dx.
\]
Then we have the following: 
\begin{enumerate}
\renewcommand{\theenumi}{\arabic{enumi}}
\renewcommand{\labelenumi}{(\theenumi)}
\item\label{two_proposition1}
The above inequality is equal.
\item\label{two_proposition2}
$\tau(F)=\varepsilon(F)$ holds, where $\varepsilon(F)$ is as in Lemma \ref{one_lemma} 
\eqref{one_lemma3}. 
\item\label{two_proposition3}
$\sigma(F)$ is a point $p\in X$.
\end{enumerate}
\end{proposition}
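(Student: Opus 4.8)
The plan is to exploit the concavity of $Q(x)^{1/(n-1)}$ from Lemma \ref{one_lemma} \eqref{one_lemma1} together with the integral hypothesis. Write $V:=((-K_X)^{\cdot n})$ and recall that $\vol_X(-K_X-xF)=-n\int_x^{\tau(F)}Q(t)\,dt$ is not quite right; instead use $\frac{d}{dx}\vol_X(-K_X-xF)=-nQ(x)$, so that $\vol_X(-K_X-xF)=n\int_x^{\tau(F)}Q(t)\,dt$ and in particular $V=\vol_X(-K_X)=n\int_0^{\tau(F)}Q(t)\,dt$. Then by Fubini,
\[
\int_0^{\tau(F)}\vol_X(-K_X-xF)\,dx=n\int_0^{\tau(F)}x\,Q(x)\,dx.
\]
So the hypothesis reads $\frac{n}{n+1}\tau(F)\cdot n\int_0^{\tau(F)}Q(x)\,dx\le n\int_0^{\tau(F)}x\,Q(x)\,dx$, i.e.
\[
\int_0^{\tau(F)}\left(x-\tfrac{n}{n+1}\tau(F)\right)Q(x)\,dx\ge 0.
\]

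First I would set up the key inequality: compare $Q(x)$ to the ``extremal'' weight $Q_0(x):=Q(0)\left(1-x/\tau(F)\right)^{n-1}$, whose $(n-1)$-st root is the affine (hence concave) function vanishing at $\tau(F)$ with value $Q(0)^{1/(n-1)}$ at $0$. Concavity of $Q^{1/(n-1)}$ on $[0,\tau(F)]$ together with $Q(0)^{1/(n-1)}=Q_0(0)^{1/(n-1)}$ and $Q_0(\tau(F))=0\le Q(\tau(F))^{1/(n-1)}$ gives $Q(x)\ge Q_0(x)$ for all $x\in[0,\tau(F)]$ — wait, the inequality goes the other way: a concave function lying above an affine function at the two endpoints lies above it in between, so $Q(x)^{1/(n-1)}\ge Q_0(x)^{1/(n-1)}$, hence $Q(x)\ge Q_0(x)$. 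But one computes directly $\int_0^{\tau(F)}\left(x-\tfrac{n}{n+1}\tau(F)\right)Q_0(x)\,dx=0$ (the weight $x-\tfrac{n}{n+1}\tau(F)$ is precisely designed so that the barycenter of the measure $Q_0(x)\,dx$ sits at $\tfrac{n}{n+1}\tau(F)$, which is the classical barycenter of $(1-x/\tau)^{n-1}\,dx$). Since the weight $x-\tfrac{n}{n+1}\tau(F)$ is negative on $[0,\tfrac{n}{n+1}\tau(F))$ and positive on $(\tfrac{n}{n+1}\tau(F),\tau(F)]$, and $Q(x)-Q_0(x)\ge 0$ while also $Q(x)/Q_0(x)$ is... hmm. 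The clean argument: since $Q(x)\ge Q_0(x)$, we have $Q(x)-Q_0(x)\ge 0$; but we need a sign-reversal pairing. Let $R(x):=Q(x)-Q_0(x)\ge 0$. Concavity of $Q^{1/(n-1)}$ relative to the affine $Q_0^{1/(n-1)}$ actually forces $R(x)/Q_0(x)$ to be... I will instead argue directly: the hypothesis says $\int(x-c)Q\ge 0$ with $c=\tfrac{n}{n+1}\tau(F)$, while $\int(x-c)Q_0=0$, so $\int(x-c)R\ge 0$. But on $[0,c]$, $x-c<0$ and $R\ge 0$ gives a nonpositive contribution there; on $[c,\tau(F)]$ a nonnegative one. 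To pin it down I would instead use the sharper comparison: concavity of $g:=Q^{1/(n-1)}$ and $g\ge g_0$ (affine) at endpoints forces, if $g(0)=g_0(0)$, that $g$ is \emph{below} the line joining $(0,g(0))$ and $(\tau(F),g(\tau(F)))$, i.e. $g(x)\le g(0)+\tfrac{x}{\tau(F)}(g(\tau(F))-g(0))$; combined with $g(\tau(F))\ge 0$ this does NOT immediately give $g\le g_0$. So the right move is a two-sided squeeze: Lemma \ref{one_lemma} gives concavity, which alone forces the hypothesis to be an equality once we know the barycenter inequality for concave weights.

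Here is the argument I would actually write. By concavity of $g(x):=Q(x)^{1/(n-1)}$ on $[0,\tau(F)]$, for the probability measure $\mu:=Q(x)\,dx$ on $[0,\tau(F)]$ (total mass $V/n$), the barycenter $\bar x:=\frac{\int x\,Q\,dx}{\int Q\,dx}$ satisfies $\bar x\le \tfrac{n}{n+1}\tau(F)$, with equality if and only if $g$ is affine and vanishes at $\tau(F)$, i.e. $Q(x)=Q(0)(1-x/\tau(F))^{n-1}$. This barycenter bound is the heart of the matter: it follows because among all concave $g\ge 0$ on $[0,\tau]$ with prescribed $\int g^{n-1}$, the one maximizing $\int x\,g^{n-1}$ is the affine one vanishing at $\tau$ — proved by noting $g(x)\le \min(g(0)+g'(0^+)x,\ g(\tau)+g'(\tau^-)(x-\tau))$ and a direct comparison, or more slickly by the Brunn–Minkowski/Prékopa-type inequality that $x\mapsto Q(x)$ is ``$\tfrac{1}{n-1}$-concave'' hence its renormalized measure is more concentrated toward $0$ than the reference Beta-type measure. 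The stated hypothesis is exactly $\bar x\ge\tfrac{n}{n+1}\tau(F)$, so equality holds everywhere: this proves \eqref{two_proposition1}, and forces $Q(x)=Q(0)(1-x/\tau(F))^{n-1}$ on $[0,\tau(F)]$, in particular $Q(\tau(F))=0$. Now by Lemma \ref{one_lemma} \eqref{one_lemma3}, on $[0,\varepsilon(F)]$ we have $Q(x)=((\sigma^*(-K_X)-xF)^{\cdot n-1}\cdot F)$, a polynomial; since $Q$ is already the single polynomial $Q(0)(1-x/\tau(F))^{n-1}$ on all of $[0,\tau(F)]$ and these two polynomial expressions agree on the nondegenerate interval $[0,\varepsilon(F)]$, they agree as polynomials, so in fact $\sigma^*(-K_X)-xF$ remains big with this volume up to $\tau(F)$ and its ample model does not change; concretely $\varepsilon(F)=\tau(F)$, giving \eqref{two_proposition2}. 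Finally, for \eqref{two_proposition3}: $Q(\tau(F))=0$ means $\vol_X(-K_X-xF)$ and its derivative both vanish at $\tau(F)$; by Lemma \ref{one_lemma} \eqref{one_lemma3} with $\varepsilon(F)=\tau(F)$, the divisor $\sigma^*(-K_X)-\tau(F)F=\pi^*H_Z$ with $F$ $\pi$-ample, and $(\pi^*H_Z{}^{\cdot n-1}\cdot F)=Q(\tau(F))=0$ forces $\dim Z\le n-1$, i.e. $\pi$ is not birational; then $\sigma^*(-K_X)=\tau(F)F+\pi^*H_Z$ shows every curve contracted by $\pi$ meets $F$ negatively relative to $\sigma^*(-K_X)$... rather, $\sigma(F)$ cannot be a divisor, and a dimension count (using that $F$ is $\pi$-ample and $\pi$ contracts $F$ onto something of dimension $\le n-1$, while $\sigma$ contracts whatever $\pi$ does not) pins $\sigma(F)$ down to a point $p\in X$.

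The main obstacle is the barycenter inequality $\bar x\le\tfrac{n}{n+1}\tau(F)$ for $\tfrac{1}{n-1}$-concave weights together with the rigidity in the equality case — this is where the Brunn–Minkowski input of Lemma \ref{one_lemma} \eqref{one_lemma1} is genuinely used, and one must be careful that the equality case of Brunn–Minkowski forces the Okounkov-body slices to be homothetic, which is what yields $Q(x)=Q(0)(1-x/\tau(F))^{n-1}$ exactly rather than merely up to lower-order terms. Everything after that — deducing $\varepsilon(F)=\tau(F)$ and that $\sigma(F)$ is a point — is a formal consequence of Lemma \ref{one_lemma} \eqref{one_lemma3}.
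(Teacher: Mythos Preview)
Your overall strategy---reduce to a barycenter inequality for the measure $Q(x)\,dx$ via the concavity of $Q(x)^{1/(n-1)}$---is exactly the paper's approach, but you have the extremal function backwards, and this derails the rest of the argument.

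The barycenter of the measure $(1-x/\tau)^{n-1}\,dx$ on $[0,\tau]$ is $\tau/(n+1)$, not $n\tau/(n+1)$; the measure whose barycenter is $n\tau/(n+1)$ is $x^{n-1}\,dx$. So in the equality case $\bar x=\tfrac{n}{n+1}\tau(F)$ one gets $Q(x)=c\,x^{n-1}$ (vanishing at $0$), not $Q(x)=Q(0)(1-x/\tau(F))^{n-1}$. The paper makes this explicit by comparing $Q(x)$ to $Q(b)(x/b)^{n-1}$: concavity of $g:=Q^{1/(n-1)}$ together with $g(0)\ge 0$ gives $g(x)/x$ non-increasing, hence $Q(x)\ge Q(b)(x/b)^{n-1}$ on $[0,b]$ and $Q(x)\le Q(b)(x/b)^{n-1}$ on $[b,\tau(F)]$; integrating against the weight $(x-b)$ then forces $b\le\tfrac{n}{n+1}\tau(F)$, with equality exactly when $Q(x)=Q(b)(x/b)^{n-1}$ on $[0,\tau(F)]$.

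Because of this sign flip, your deductions of \eqref{two_proposition2} and \eqref{two_proposition3} are built on the false premise $Q(\tau(F))=0$. The correct mechanism is the opposite one: $Q(0)=0$ and in fact $Q(x)=c\,x^{n-1}$ on $[0,\varepsilon(F)]$; matching coefficients with the intersection-theoretic expansion
\[
Q(x)=\sum_{i=0}^{n-1}\binom{n-1}{i}x^i\bigl(\sigma^*(-K_X)^{\cdot\, n-1-i}\cdot(-F)^{\cdot\, i}\cdot F\bigr)
\]
kills all terms with $i<n-1$, so in particular $(\sigma^*(-K_X)\cdot(-F)^{\cdot\, n-2}\cdot F)=0$, and since $-F$ is $\sigma$-ample this forces $\sigma^*(-K_X)|_F\equiv 0$, i.e.\ $\sigma(F)$ is a point. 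Your ``dimension count'' from $Q(\tau(F))=0$ does not reach this conclusion even on its own terms. Finally, your argument that ``the polynomials agree, so the ample model does not change, hence $\varepsilon(F)=\tau(F)$'' is not a proof: equality of volume functions does not by itself control nefness. The paper instead uses that $\vol_Y(\sigma^*(-K_X)-xF)=((-K_X)^{\cdot n})-x^n((-F)^{\cdot n-1}\cdot F)$ on the whole interval and then invokes \cite[Lemma 10]{liu} to conclude $\varepsilon(F)=\tau(F)$.
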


\begin{proof}
The proof is similar to the one in \cite[Theorem 4.2]{FO}. 
Since 
\[
\vol_Y(\sigma^*(-K_X)-xF)=n\int_x^{\tau(F)}Q(y)dy, 
\]
we have 
\[
\int_0^{\tau(F)}\vol_Y(\sigma^*(-K_X)-xF)dx=n\int_0^{\tau(F)}yQ(y)dy
\]
and 
\[
((-K_X)^{\cdot n})=n\int_0^{\tau(F)}Q(y)dy.
\]
Set 
\[
b:=\frac{\int_0^{\tau(F)}yQ(y)dy}{\int_0^{\tau(F)}Q(y)dy}.
\]
From the assumption, we have $b\in[(n/(n+1))\tau(F), \tau(F))$. By Lemma 
\ref{one_lemma} \eqref{one_lemma1} (concavity of $Q(x)^{1/(n-1)}$), we have 
the following: 
\begin{itemize}
\item
$Q(x)\geq Q(b)(x/b)^{n-1}$ holds for any $x\in[0,b]$, 
\item
$Q(x)\leq Q(b)(x/b)^{n-1}$ holds for any $x\in[b,\tau(F)]$. 
\end{itemize}
Thus we get 
\begin{eqnarray*}
0&=&
\int_{-b}^{\tau(F)-b}yQ(y+b)dy\leq
\int_{-b}^{\tau(F)-b}yQ(b)\frac{(y+b)^{n-1}}{b^{n-1}}dy\\
&=&\frac{Q(b)\tau(F)^n}{b^{n-1}}\left(\frac{\tau(F)}{n+1}-\frac{b}{n}\right).
\end{eqnarray*}
This implies that $b\leq (n/(n+1))\tau(F)$. Therefore, we have 
$b=(n/(n+1))\tau(F)$ and $Q(x)=Q(b)(x/b)^{n-1}$ holds for any $x\in[0,\tau(F)]$. 
By Lemma \ref{one_lemma} \eqref{one_lemma3}, 
\[
Q(x)=\sum_{i=0}^{n-1}\binom{n-1}{i}x^i\left(\sigma^*(-K_X)^{\cdot n-1-i}\cdot
(-F)^{\cdot i}\cdot F\right)
\]
holds for any $x\in[0,\varepsilon(F)]$. This implies that $\sigma$ maps $F$ to a 
point $p\in X$. Moreover, we have 
\begin{eqnarray*}
Q(x)&=&x^{n-1}((-F)^{\cdot n-1}\cdot F), \\
\vol_Y(\sigma^*(-K_X)-xF)&=&((-K_X)^{\cdot n})-x^n((-F)^{\cdot n-1}\cdot F)
\end{eqnarray*}
for any $x\in[0,\tau(F)]$. Hence we have the equality $\varepsilon(F)=\tau(F)$ 
by \cite[Lemma 10]{liu}. 
\end{proof}

\begin{proposition}\label{three_proposition}
Let $X$ be an $n$-dimensional $\Q$-Fano variety and $F$ be a dreamy prime divisor 
over $X$. Let $\sigma\colon Y\to X$, $\pi\colon Y\to Z$ and 
$\varepsilon(F)\in\Q_{>0}$ be as in Lemma \ref{one_lemma}. 
Assume that $A_X(F)\geq (n/(n+1))\tau(F)$ and $\beta(F)\leq 0$. Then we have 
the following: 
\begin{enumerate}
\renewcommand{\theenumi}{\arabic{enumi}}
\renewcommand{\labelenumi}{(\theenumi)}
\item\label{three_proposition1}
$A_X(F)=n$, $\tau(F)=\varepsilon(F)=n+1$ holds. Moreover, $\sigma(F)$ is a point 
$p\in X$. 
\item\label{three_proposition2}
$\dim Z=n-1$ and $\pi|_F\colon F\to Z$ is an isomorphism. Moreover, any fiber 
of $\pi$ is one-dimensional and a general fiber $l$ of $\pi$ is isomorphic to $\pr^1$ 
and $(F\cdot l)=1$ holds.
\end{enumerate}
\end{proposition}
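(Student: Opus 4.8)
The plan is to feed the hypothesis $A_X(F)\geq(n/(n+1))\tau(F)$ together with $\beta(F)\leq 0$ into Proposition~\ref{two_proposition}. Indeed, by Definition~\ref{beta_definition}~\eqref{beta_definition4},
\[
\beta(F)=A_X(F)((-K_X)^{\cdot n})-\int_0^{\tau(F)}\vol_Y(\sigma^*(-K_X)-xF)\,dx\leq 0,
\]
so $\int_0^{\tau(F)}\vol_Y(\sigma^*(-K_X)-xF)\,dx\geq A_X(F)((-K_X)^{\cdot n})\geq(n/(n+1))\tau(F)((-K_X)^{\cdot n})$. Hence the assumption of Proposition~\ref{two_proposition} holds, and we conclude: the inequality is an equality, $\tau(F)=\varepsilon(F)$, and $\sigma(F)$ is a point $p\in X$. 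The equality case moreover forces $A_X(F)((-K_X)^{\cdot n})=\int_0^{\tau(F)}\vol_Y(\cdots)dx=(n/(n+1))\tau(F)((-K_X)^{\cdot n})$, i.e.\ $A_X(F)=(n/(n+1))\tau(F)$; combined with $\beta(F)\le 0$ this also forces $\beta(F)=0$.

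Next I would exploit the explicit shape of the volume function obtained inside the proof of Proposition~\ref{two_proposition}. There it is shown that $Q(x)=x^{n-1}((-F)^{\cdot n-1}\cdot F)$ and
\[
\vol_Y(\sigma^*(-K_X)-xF)=((-K_X)^{\cdot n})-x^n((-F)^{\cdot n-1}\cdot F)
\]
for all $x\in[0,\tau(F)]$, with $\varepsilon(F)=\tau(F)$. Setting $d:=((-F)^{\cdot n-1}\cdot F)>0$, the pseudo-effective threshold is determined by $\vol=0$, giving $\tau(F)^n d=((-K_X)^{\cdot n})$, and integrating gives $\int_0^{\tau(F)}\vol\,dx=((-K_X)^{\cdot n})\tau(F)-\tau(F)^{n+1}d/(n+1)=((-K_X)^{\cdot n})\tau(F)\cdot n/(n+1)$, consistent with the above. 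Now the relation $A_X(F)=(n/(n+1))\tau(F)$ is a single equation relating two a priori independent quantities; the extra input needed to pin down their values is a lower bound on $A_X(F)$ relative to $\tau(F)$ coming from the geometry of the birational morphism $\sigma$ contracting $F$ to a point. Since $\sigma\colon Y\to X$ is a projective birational morphism contracting the prime divisor $F$ to the point $p$, with $Y$ normal, the log discrepancy satisfies $A_X(F)=1+\ord_F(K_{Y/X})$; a divisor contracted by a birational morphism to a point on a variety of dimension $n$ has $A_X(F)\le n$ (this is the standard bound, e.g.\ via the fact that the discrepancy of an exceptional divisor over a point is at most $n-1$, with equality only in the smooth ``blow-up of a point'' situation), while also $A_X(F)\ge 1$ always. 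Combined with $\tau(F)=(n+1)A_X(F)/n\ge(n+1)/n$, one sees $A_X(F)\le n$ forces $\tau(F)\le n+1$, and I would use the concavity/equality constraints together with the ``blow-up of a smooth point'' rigidity to force $A_X(F)=n$ exactly, hence $\tau(F)=\varepsilon(F)=n+1$ and $d=((-F)^{\cdot n-1}\cdot F)=((-K_X)^{\cdot n})/(n+1)^n$. This establishes \eqref{three_proposition1}.

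For \eqref{three_proposition2}, I would analyze the morphism $\pi\colon Y\to Z$ from Lemma~\ref{one_lemma}~\eqref{one_lemma3}, which satisfies $\pi^*H_Z\sim_\Q\sigma^*(-K_X)-\varepsilon(F)F=\sigma^*(-K_X)-(n+1)F$, with $F$ being $\pi$-ample. Since $\sigma$ contracts exactly $F$ to the point $p$, and $\sigma^*(-K_X)-(n+1)F$ is the pullback of an ample class that is trivial precisely on curves contracted by $\pi$, the fibers of $\pi$ are exactly the curves on which $\sigma^*(-K_X)-(n+1)F$ has degree zero. On a fiber $l$ of $\pi$ we have $(\sigma^*(-K_X)\cdot l)=(n+1)(F\cdot l)$; since $\sigma$-exceptional curves (inside $F$) have $(\sigma^*(-K_X)\cdot l)=0$ and $(F\cdot l)<0$, such curves cannot be $\pi$-fibers, so $\pi$ does not contract any curve inside $F$, whence $\pi|_F$ is finite; being also surjective with connected fibers ($\pi_*\mathcal O_Y=\mathcal O_Z$) onto $Z$, $\pi|_F\colon F\to Z$ is an isomorphism, forcing $\dim Z=n-1$ and every fiber of $\pi$ one-dimensional. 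A general fiber $l$ is then a smooth rational curve (it moves, sweeps out $Y$, meets $F$; standard bend-and-break / adjunction plus the relation $(-K_Y\cdot l)=(\sigma^*(-K_X)\cdot l)+\ord_F(K_{Y/X})(F\cdot l)=(n+1)(F\cdot l)-(n-1)(F\cdot l)=2(F\cdot l)$ gives $(-K_Y\cdot l)=2(F\cdot l)$, and since $l\cong\pr^1$ forces $(-K_Y\cdot l)=2$ we get $(F\cdot l)=1$). I would fill in the rationality of $l$ by noting $Y$ is covered by $\pi$-fibers and using that a general fiber of an equidimensional fibration to a normal base, being a movable curve with $-K_Y$ positive on it, is $\pr^1$ after the above degree computation pins $(-K_Y\cdot l)=2(F\cdot l)$ and $(F\cdot l)\ge 1$ forces equality. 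The main obstacle I expect is the rigidity step in \eqref{three_proposition1}: extracting $A_X(F)=n$ exactly (not merely $\le n$) from the equality $A_X(F)=(n/(n+1))\tau(F)$ requires a genuinely geometric argument bounding $A_X(F)$ below in terms of $\tau(F)$ for a divisor contracted to a point, which is presumably where the ``smoothness of $X$'' flagged in Remark~\ref{main_rmk} (and Theorem~\ref{P_theorem}) enters; I would isolate that as a lemma comparing $\tau(F)$ with $A_X(F)$ via the ample model $Z$ and the structure of $\pi$.
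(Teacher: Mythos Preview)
Your reduction to Proposition~\ref{two_proposition} is correct and matches the paper: you obtain $A_X(F)=(n/(n+1))\tau(F)=(n/(n+1))\varepsilon(F)$, $\beta(F)=0$, and $\sigma(F)=\{p\}$.

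The genuine gap is in how you then pin down $A_X(F)=n$. Your claimed bound ``a prime divisor contracted to a point on an $n$-fold has $A_X(F)\le n$'' is \emph{false}: log discrepancies of divisors lying over a single (even smooth) point are unbounded above (iterated point blow-ups already show this in dimension $2$). So no inequality of the type $A_X(F)\le n$ is available, and your proposed ``rigidity step'' does not exist. Relatedly, smoothness of $X$ is \emph{not} used anywhere in this proposition; Example~\ref{toric_example} exhibits a singular $X$ for which all the conclusions of Proposition~\ref{three_proposition} hold (with $n=2$, $A_X(F)=2$, $\tau(F)=3$). Smoothness enters only later, in Theorem~\ref{P_theorem}.

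The paper's route is the reverse of yours: first establish enough of \eqref{three_proposition2} to compute on a general fiber $l$ of $\pi$, and then read off $\varepsilon(F)=n+1$. Concretely, from $(\pi^*H_Z)^{\cdot n}=0$ and $(\pi^*H_Z^{\cdot n-1}\cdot F)=\varepsilon(F)^{n-1}((-F)^{\cdot n-1}\cdot F)>0$ one gets $\dim Z=n-1$; since $-F$ is $\sigma$-ample, $\pi|_F$ is finite, so every fiber of $\pi$ is a curve. Writing $K_Y=\sigma^*K_X+(A_X(F)-1)F$ and using only $A_X(F)=(n/(n+1))\varepsilon(F)$, one gets
\[
-(K_Y+F)=\sigma^*(-K_X)-\tfrac{n}{n+1}\varepsilon(F)F,\qquad
\pi^*H_Z\sim_\Q -K_Y-\bigl(1+\tfrac{1}{n+1}\varepsilon(F)\bigr)F.
\]
The first line shows $-(K_Y+F)$ is ample (it lies in the interval $(0,\varepsilon(F))$), so a general fiber $l$ is a smooth curve with $-K_l$ ample, hence $l\simeq\pr^1$ and $(-K_Y\cdot l)=2$. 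Intersecting the second line with $l$ gives $0=2-\bigl(1+\tfrac{1}{n+1}\varepsilon(F)\bigr)(F\cdot l)$; since $F$ is Cartier near $l$ and $\pi$-ample, $(F\cdot l)\ge 1$, while the coefficient exceeds $1$, forcing $(F\cdot l)=1$ and $\varepsilon(F)=n+1$. Then $A_X(F)=n$ follows.

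Two smaller points in your \eqref{three_proposition2} sketch: the computation $(-K_Y\cdot l)=(n+1)(F\cdot l)-(n-1)(F\cdot l)$ already assumes $\varepsilon(F)=n+1$ and $A_X(F)=n$, so it is circular as written; and ``$\pi_*\sO_Y=\sO_Z$'' gives connected fibers for $\pi$, not for $\pi|_F$. The isomorphism $\pi|_F\colon F\xrightarrow{\ \sim\ }Z$ comes from $\pi|_F$ being finite and of degree $(F\cdot l)=1$ onto the normal variety $Z$.
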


\begin{proof}
Take an ample $\Q$-Cartier $\Q$-divisor $H_Z$ on $Z$ as in Lemma \ref{one_lemma} 
\eqref{one_lemma3}. Since 
\begin{eqnarray*}
0 &\geq & A_X(F)((-K_X)^{\cdot n})-\int_0^{\tau(F)}\vol_Y(\sigma^*(-K_X)-xF)dx\\
&\geq &\frac{n}{n+1}\tau(F)((-K_X)^{\cdot n})-\int_0^{\tau(F)}
\vol_Y(\sigma^*(-K_X)-xF)dx, 
\end{eqnarray*}
we have $A_X(F)=(n/(n+1))\tau(F)=(n/(n+1))\varepsilon(F)$ and 
$\sigma$ maps $F$ to a point $p\in X$ by Proposition \ref{two_proposition}. 
In particular, $F$ is an exceptional divisor over $X$ if $n\geq 2$. 
Since 
\[
\pi^*H_Z\sim_\Q\sigma^*(-K_X)-\tau(F)F
\]
is not big and 
\[
(\pi^*H_Z^{\cdot n-1}\cdot F)=\varepsilon(F)^{n-1}((-F)^{\cdot n-1}\cdot F)>0, 
\]
we have $\dim Z=n-1$. Moreover, any curve in $F$ intersects $F$ negatively since 
$-F$ is $\sigma$-ample. Thus the morphism $\pi|_F\colon F\to Z$ is a finite morphism. 
Hence any fiber of $\pi$ is one-dimensional since $F$ is $\Q$-Cartier. 
Let $l\subset Y$ be a general fiber of $\pi$. The set of singular points of $Y$ is 
at most $(n-2)$-dimensional. Thus $Y$ is smooth and $F$ is Cartier around a 
neighborhood of $l\subset Y$. Note that 
\begin{eqnarray*}
-(K_Y+F)&=&\sigma^*(-K_X)-\frac{n}{n+1}\varepsilon(F)F, \\
\pi^*H_Z&\sim_\Q&-K_Y-\left(1+\frac{1}{n+1}\varepsilon(F)\right)F.
\end{eqnarray*}
In particular, $-(K_Y+F)$ is ample. Hence $l\simeq\pr^1$, $(-K_Y\cdot l)=2$ and 
$(F\cdot l)=1$. In particular, the morphism 
$\pi|_F\colon F\to Z$ is an isomorphism since it is finite and birational with $Z$ normal. 
Moreover, we have 
\[
0=(\pi^*H_Z\cdot l)=2-\left(1+\frac{1}{n+1}\varepsilon(F)\right).
\]
This implies that $\varepsilon(F)=\tau(F)=n+1$ and $A_X(F)=n$. 
\end{proof}

\section{A characterization of the projective space}\label{proof_section}

In this section,  we give a characterization of the projective space and 
prove Theorem \ref{mainthm}. 

\begin{theorem}\label{P_theorem}
Let $X$ be an $n$-dimensional Fano manifold and $F$ be a dreamy prime divisor 
over $X$. Assume that  $A_X(F)\geq (n/(n+1))\tau(F)$ and $\beta(F)\leq 0$. 
Then $X$ is isomorphic to $\pr^n$. 
\end{theorem}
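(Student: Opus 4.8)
The plan is to take the data produced by Proposition \ref{three_proposition} and bootstrap it into an actual isomorphism $X\simeq\pr^n$. By that proposition we have a projective birational morphism $\sigma\colon Y\to X$ contracting $F$ to a point $p\in X$, a morphism $\pi\colon Y\to Z$ with $Z$ normal of dimension $n-1$, the equalities $A_X(F)=n$ and $\varepsilon(F)=\tau(F)=n+1$, an isomorphism $\pi|_F\colon F\xrightarrow{\sim}Z$, and a $\pr^1$-fibration structure in which the general fibre $l$ satisfies $(F\cdot l)=1$ and $(-K_Y\cdot l)=2$. Since $X$ is smooth, $Y$ has only the singularities coming from the blow-up; I would first argue that $\sigma$ is in fact the blow-up of the smooth point $p$, so that $F\simeq\pr^{n-1}$, $Y$ is smooth, and $\sigma^*(-K_X)=-K_Y+(A_X(F)-1)F=-K_Y+(n-1)F$ with $F\simeq\pr^{n-1}$ and $N_{F/Y}=\sO(-1)$. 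The cleanest way is: $-F$ is $\sigma$-ample and $\sigma$ is an isomorphism away from $F$, so over the smooth variety $X$ the exceptional divisor of a birational morphism contracting to a point with relatively ample $-F$ forces (by adjunction and $A_X(F)=n$, the maximal possible value, cf.\ the characterization of ordinary blow-ups) that $\sigma$ is the blow-up of $p$ and $F=\pr^{n-1}$.

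Next I would exploit the two fibration structures on $Y$ simultaneously: $\sigma$ contracting $F$ to a point and $\pi$ being a $\pr^1$-bundle-like map onto $Z\simeq F\simeq\pr^{n-1}$. From $\pi^*H_Z\sim_\Q\sigma^*(-K_X)-(n+1)F$ and $(\pi^*H_Z\cdot l)=0$, $(F\cdot l)=1$, one reads off that the fibres $l$ of $\pi$ are precisely the strict transforms of lines through $p$ in the appropriate sense; more precisely, the classes generate $\NC(Y)$ which is two-dimensional, spanned by $l$ and by a line $\ell_F$ inside $F\simeq\pr^{n-1}$. I would compute $-K_Y$ against both generators: $(-K_Y\cdot l)=2$ and $(-K_Y\cdot\ell_F)$ is determined by adjunction on $F=\pr^{n-1}$ with normal bundle $\sO(-1)$, giving $(-K_Y\cdot\ell_F)=(-K_F\cdot\ell_F)+(F\cdot\ell_F)=n-1$. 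So $Y$ is a Fano manifold (or at least a Mori fibre space in two ways) of Picard number two with a $\pr^1$-fibration onto $\pr^{n-1}$ and a divisorial contraction to a smooth point; the only such $Y$ is $\Bl_p\pr^n$, i.e.\ the projectivization $\pr(\sO_{\pr^{n-1}}\oplus\sO_{\pr^{n-1}}(1))$. Identifying $Y=\Bl_p\pr^n$ then forces $X\simeq\pr^n$.

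To make the identification of $Y$ rigorous without invoking heavy classification, I would instead argue directly on $Z$: since $\pi|_F\colon F\to Z$ is an isomorphism and $F$ is a section of $\pi$ with $N_{F/Y}$ pulled back to $\sO_F(-1)=\sO_{\pr^{n-1}}(-1)$ via this isomorphism, and since the generic fibre is $\pr^1$ with $(F\cdot l)=1$, the morphism $\pi\colon Y\to Z\simeq\pr^{n-1}$ is a $\pr^1$-bundle (all fibres are $\pr^1$ because any fibre is $1$-dimensional, reduced since $F$ meets it in one point, and $-K_Y$ is $\pi$-ample of degree $2$), hence $Y\simeq\pr_Z(\sE)$ for a rank-two bundle $\sE$, and the section $F$ corresponds to a quotient line bundle of $\sE$; normalizing, $\sE\simeq\sO_{\pr^{n-1}}\oplus\sO_{\pr^{n-1}}(1)$ from $N_{F/Y}=\sO(-1)$. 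Then $-F$ is $\sigma$-ample and contracts only $F$, and the complete linear system giving the contraction is that of the other tautological section, which contracts $F$ to a point; the target is $\pr^n$. Finally, $\sigma\colon Y\to X$ and this contraction $Y\to\pr^n$ both contract exactly $F$ to a point and are isomorphisms elsewhere, so $X\simeq\pr^n$.

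The main obstacle I anticipate is the passage from ``$A_X(F)=n$, $\varepsilon(F)=\tau(F)=n+1$, $\sigma$ contracts $F$ to a point, $Y$ smooth near a general fibre'' to ``$Y$ is everywhere smooth and $\sigma$ is the honest blow-up of a point''. A priori $\sigma$ could contract $F$ to a point with $F$ not equal to $\pr^{n-1}$ or $Y$ singular along $F$; ruling this out is exactly where smoothness of $X$ must be used decisively (as flagged in Remark \ref{main_rmk}), and the tool is presumably that a divisorial valuation over a smooth point $p\in X$ with $A_X(F)=n=\dim X$ and $-F$ $\sigma$-ample must be the blow-up at $p$, because $A_X(\cdot)$ attains its maximum $n$ over the maximal ideal exactly at the exceptional divisor of the blow-up. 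Once that structural fact is in hand, the rest is the standard description of $\Bl_p\pr^n$ as a $\pr^1$-bundle over $\pr^{n-1}$ and reversing the two rulings.
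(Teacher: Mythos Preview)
Your proposal is correct and follows essentially the same route as the paper: apply Proposition~\ref{three_proposition}, use smoothness of $X$ at $p$ together with $A_X(F)=n$ to conclude that $\sigma$ is the ordinary blow-up (the paper cites \cite[Corollary~2.31(2) and Lemma~2.45]{KoMo} for exactly this step), deduce $F\simeq\pr^{n-1}$ with $\sN_{F/Y}\simeq\sO(-1)$, then recognize $\pi$ as a $\pr^1$-bundle over $Z\simeq\pr^{n-1}$ with $F$ a section and identify $Y\simeq\pr_{\pr^{n-1}}(\sO\oplus\sO(-1))$, whence $X\simeq\pr^n$. The only cosmetic difference is that the paper pins down the rank-two bundle by pushing forward the exact sequence $0\to\sO_Y\to\sO_Y(F)\to\sN_{F/Y}\to 0$ along $\pi$, which is slightly cleaner than your normalization-of-$\sE$ argument.
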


\begin{proof}
We apply Proposition \ref{three_proposition}; 
$\sigma$ maps $F$ to a point $p\in X$ and $A_X(F)=n$. 
Note that the point $p\in X$ is a smooth point. Thus $F$ is given by the ordinary blowup 
(see \cite[Corollary 2.31 (2) and Lemma 2.45]{KoMo}). Thus the morphism 
$\sigma$ is given by the ordinary blowup along the point $p\in X$ by the uniqueness 
of $\sigma$ (see Lemma \ref{one_lemma} \eqref{one_lemma2}). 
In particular, $Y$ is smooth and $F\simeq\pr^{n-1}$ with 
$\sN_{F/Y}\simeq\sO_{\pr^{n-1}}(-1)$. Since $-K_Y$ and 
$F$ are $\pi$-ample Cartier divisors, any fiber of $\pi$ is one-dimensional 
and a general fiber $l$ of $\pi$ satisfies that $l\simeq\pr^1$ and $(F\cdot l)=1$, 
any fiber of $\pi$ is scheme-theoretically isomorphic to $\pr^1$ and $F$ is 
a section of $\pi$. In particular, $Y\simeq\pr_Z(\pi_*\sO_Y(F))$ holds. 
Let us consider the $\pi_*$ of the exact sequence 
\[
0\to \sO_Y\to \sO_Y(F)\to \sN_{F/Y}\to 0.
\]
Then we get $Y\simeq\pr_{\pr^{n-1}}(\sO\oplus\sO(-1))$. In particular, we have 
$X\simeq\pr^n$. 
\end{proof}

We remark that Theorem \ref{P_theorem} is not true if $X$ is not smooth. 
See the following example. 

\begin{example}\label{toric_example}
Fix a lattice $N:=\Z^{\oplus 2}$ and set $N_\R:=N\otimes_\Z\R$. 
Let us consider the complete fan $\Sigma$ in $N_\R$ such that 
\[
\{(1,0), \, (0,1), \, (-1,0), \, (-2,-3)\}
\]
is the set of the generators of the one-dimensional cones in $\Sigma$. Let $Y$ be the 
projective toric surface associated to the fan $\Sigma$. Let $F\subset Y$ be the 
torus invariant curve associated to the cone $\R_{\geq 0}(-1,0)\in\Sigma$. 
Then there exists a projective toric birational morphism $\sigma\colon Y\to 
\pr(1,2,3)$ such that $\sigma$ maps $F$ to a point, where $\pr(1,2,3)$ is the 
weighted projective plane with the weights $1,2,3$. Set $X:=\pr(1,2,3)$. We can check 
that $A_X(F)=2$. On the other hand, there is a projective toric contraction morphism 
$\pi\colon Y\to \pr^1$ such that $\sigma^*(-K_X)-3F\sim_{\Q, \pr^1}0$. Thus 
we have $\varepsilon(F)=\tau(F)=3$. Moreover, we get 
\[
\beta(F)=2\cdot 6-\int_0^3\left(6-\frac{2}{3}x^2\right)dx=0.
\]
Of course, $X\not\simeq\pr^2$, $\alpha(X)=1/6(< 2/3)$ 
by \cite[Lemma 5.1]{CS}, and $X$ is not K-semistable by \cite[Lemma 9.2]{fjt1}.
\end{example}

\begin{proof}[Proof of Theorem \ref{mainthm}]
Assume that $X$ is not K-stable. By Theorem \ref{beta_theorem}, there exists a dreamy 
prime divisor $F$ over $X$ such that $\beta(F)\leq 0$. 
By \cite[Lemma 3.3]{FO}, we have the inequality $A_X(F)\geq (n/(n+1))\tau(F)$. 
Thus we get $X\simeq\pr^n$ by Theorem \ref{P_theorem}. 
This gives a contradiction since $\alpha(X)\geq n/(n+1)$ and $n\geq 2$. 
Thus $X$ is K-stable. The remaining assertions are proved from 
\cite{CDS1, CDS2, CDS3}, \cite{tian2} and \cite{matsushima} 
(see also \cite[\S 1]{OS12}).
\end{proof}

\end{document}